\definecolor{Gray}{gray}{0.80}
\definecolor{LightGray}{gray}{0.90}
\newcommand{\cD}{\mathcal{D}}
\newcommand{\cF}{\mathcal{F}}
\newcommand{\cM}{\mathcal{M}}
\newcommand{\cP}{\mathcal{P}}
\newcommand{\cX}{\mathcal{X}}
\newcommand{\fS}{\mathfrak{S}}
\newcommand{\bE}{\mathbb{E}}
\newcommand{\bR}{\mathbb{R}}
\newcommand{\PR}{\mathbb{P}}
\newcommand{\bONE}{\mathbbm{1}}
\newcommand{\dd}{ \mathrm{d}}
\renewcommand{\epsilon}{\varepsilon}
\newcommand{\vn}[1]{\left| \! \left| #1\right| \! \right|}
\newcommand{\ip}[2]{\langle #1,#2\rangle}
\numberwithin{equation}{section}
\newtheorem{theorem}{Theorem}[section]
\newtheorem{lemma}[theorem]{Lemma}
\newtheorem{proposition}[theorem]{Proposition}
\theoremstyle{definition}
\newtheorem{definition}[theorem]{Definition}
\newtheorem{remark}[theorem]{Remark}
\title{Strict continuity of the transition semigroup for the solution of a well-posed martingale problem}
\author{Richard C. Kraaij\thanks{Delft Institute of Applied Mathematics, Delft University of Technology, Van Mourik Broekmanweg 6, 2628 XE Delft, The Netherlands. \emph{E-mail address}: r.c.kraaij@tudelft.nl}}
\date{\today}
\begin{document}

\maketitle

\begin{abstract}
	In this note we connect the notion of solutions of a martingale problem to the notion of a strongly continuous and locally equi-continuous semigroup on the space of bounded continuous functions equipped with the strict topology.
	This extends the classical connection of semigroups to Markov processes that was used successfully in the context of compact spaces to the context of Polish spaces. 
	
	In addition, we consider the context of locally compact spaces and show how the transition semigroup on the space of functions vanishing at infinity can be extended to the space of bounded continuous functions.
	
\noindent \emph{Keywords: Markov processes; \and semigroups and generators; \and martingale problem; \and strict topology}

\noindent \emph{MSC2010 classification: 60J25; 60J35} 
\end{abstract}


\section{Introduction}

A key technique in the study of (Feller) Markov processes $t \mapsto \eta(t)$ on compact spaces is the use of strongly continuous semigroups $\{S(t)\}_{t \geq 0}$ and their generators. In this setting, the transition semigroup of conditional expectations
\begin{equation*}
S(t)f(x) := \bE[f(\eta(t)) \, | \, \eta(0) =x], \qquad \qquad x \in \cX,
\end{equation*}
maps $C(\cX)$ into $C(\cX)$ and is strongly continuous. The infinitesimal generator `$A = \partial_t S(t)|_{t = 0}$' encodes the behaviour of the Markov process. These techniques have been used since the 50's e.g. to study the convergence of processes via the convergence of generators \cite{Tr58,Sk58,Ku69,Ku74,EK86}.

For compact spaces, the cornerstone that allows the application of these functional analytic techniques to probability theory, and Markov process theory in particular, is the Riesz-representation theorem which states that the continuous dual space of $(C(\cX),\vn{\cdot})$ is the space of Radon measures $\cM(\cX)$ and that $C(\cX)$ equips the space of measures with the weak topology. 

Already in the context of a locally compact space this effective connection breaks down. Consider for example Brownian motion on $\bR$. The semigroup of transition operators is not strongly continuous on $C_b(\bR)$ as can be seen by considering the function $f(x) = \sin(x^2)$. 

In the locally compact setting, two analytic ways to resolve this issue are
\begin{enumerate}[(a)]
	\item weakening the topology on $C_b(\cX)$ and work with a locally convex topology instead of a Banach topology; \label{item:weaking_topology}
	\item restricting the space of functions to $C_0(\cX)$, as the continuous dual space of $(C(\cX),\vn{\cdot})$ also equals $\cM(\cX)$. \label{item:restrict_space}
\end{enumerate}
Even though $C_b(\cX)$ clearly has the benefit of having more test-functions to work with, the loss of the Banach property has lead the literature on Markov process to generally opt for the use of $C_0(\cX)$ instead. The vague topology induced by $C_0(\cX)$ on $\cM(\cX)$, however, is defective. It does not see whether mass vanishes at infinity. Thus using strongly continuous semigroups on $C_0(\cX)$ to study Markov processes on a locally compact space comes at the cost of ad-hoc control of tightness\footnote{Tightness and relative weak compactness of sets of probability measures is equivalent by the Prohorov theorem \cite[Theorem 8.6.2]{Bo07}}.

It was soon realized by Stroock and Varadhan \cite{SV69a,SV69b,SV79} followed by \cite{PaStVa77,Kus80} that a, now widely used, probabilistic technique called
\begin{enumerate}[resume]
	\item the martingale problem \label{item:martingale problem}
\end{enumerate}
is a third way to study Markov processes via its infinitesimal characterization. We briefly give the definition of the martingale problem. Let $\cD(A)$ be a linear operator on $C_b(\cX)$. Then a process $\eta$ solves the martingale problem if
\begin{equation*}
t \mapsto f(\eta(t)) - f(\eta(0)) - \int_0^t Af(\eta(s)) \dd s
\end{equation*}
is a martingale with respect to the filtration $\cF_t := \sigma(\eta(s) \, | \, s \leq t)$ for all $f \in \cD(A)$. It can be shown that the solution of a well-posed martingale problem corresponds to a Markov process.

\smallskip

As this method uses probabilistic methods, it does not suffer from the functional analytic problems mentioned above and works in the general context of Polish spaces. It can, however, not directly tap in to the analytic structure that in the compact case was provided by the Riesz-representation theorem. 

\smallskip

The appropriate topology that corresponds to the weakening described in \ref{item:weaking_topology} in our trichotomy of options is called the \text{strict} topology. This topology was first introduced by Buck \cite{Bu58} in the 50's for locally compact $\cX$ and was studied in the 70's by Sentilles \cite{Se72} in the context of more general spaces, including Polish spaces. 

The essential property of the strict topology is that the Riesz representation continuous to hold: the continuous dual space of $(C_b(\cX),\beta)$ is the space of Radon measures. This way, the strict topology is the proper extension of the uniform topology from the compact to the Polish setting and has much greater potential to be applicable to the study of probability measures than the uniform topology. The topology also satisfies various other desirable properties that are known from the compact setting, see Appendix \ref{appendix:properties_strict_topology}.

\smallskip

That the strict topology can be used for a systematic study of Markov processes and their semigroups was shown by \cite{GK01,Ca11}. Semigroup theory in the context of topologies like the strict topology has been developed in the literature on so called bi-continuous semigroups, see e.g. \cite{Ku03,Fa11}, for more references on this area see also \cite{Kr16} in which a more topological, but to some extent equivalent, point of view is taken. An application of the strict topology in the study of properties of the propagation of stochastic order can be found in \cite{KrSc18}. 

\smallskip

This note aims at the resolution of the apparent trichotomy between \ref{item:weaking_topology}, \ref{item:restrict_space} and \ref{item:martingale problem} in the context of the strict topology. The two main results of this note are:
\begin{itemize}
	\item Theorem \ref{theorem:martingale_problem_gives SCLE_semigroup} shows that, in the context of Polish spaces, the transition semigroup of a collection of solutions to a well-posed martingale problem is strongly continuous and locally equi-continuous for the strict topology. The generator of this semigroup extends the operator of the martingale problem. This allows us to unify approach \ref{item:weaking_topology} and \ref{item:martingale problem}.
	\item Theorem \ref{theorem:transition_semigroup_on_locally_compact_is_strongly_continuous} shows that, in the context of locally compact Polish spaces, the transition semigroup on $C_0(\cX)$ can be extended, given that mass is conserved, to $C_b(\cX)$. This gives a partial merge of \ref{item:weaking_topology} and \ref{item:restrict_space}, modulo the known issue of having to avoid mass running of to infinity.
\end{itemize}
The proofs of these results are relatively straightforward. The straightforwardness, however, is a consequence of identifying the strict topology as being the appropriate setting, which can therefore be considered the main message of this note. An extension of these results from Polish spaces to a more general class of topological spaces, like the weak topology on a separable Hilbert space, should be accompanied by a better understanding of the appropriate locally convex structure for $C_b(\cX)$.

\section{Preliminaries: Strongly continuous semigroups for the strict topology and the martingale problem}\label{section:martingale_problem}

Let $\cX$ be a Polish space. We denote by $C_b(\cX)$ the space of continuous and bounded functions on $\cX$. If $\cX$ is locally compact, $C_0(\cX)$ is the space of continuous functions that vanish at infinity. $\cM(\cX)$ is the space of Radon measures on $\cX$. $\cP(\cX)$ is the subset of probability measures. 

\smallskip

Denote $\bR^+ = [0,\infty)$. Finally, the Skorokhod space $D_\cX(\bR^+)$ is the space of paths $\eta : \bR^+ \rightarrow \cX$ that have left limits and are right continuous. We equip $D_{\cX}(\bR^+)$ with the Skorokhod topology, see \cite{EK86}.

\smallskip

We say that a set $D \subseteq C_b(\cX)$ \textit{separates points} if for all $x,y \in \cX$ with $x \neq y$, there exists $f \in D$ such that $f(x) \neq f(y)$. We say that $D \subseteq C_b(\cX)$ \textit{vanishes nowhere} if for all $x \in \cX$ there exists $f \in D$ such that $f(x) \neq 0$.

\subsection{The martingale problem}

\begin{definition}[The martingale problem]
	Let $A \colon \cD(A) \subseteq C_b(\cX) \rightarrow C_b(\cX)$ be a linear operator. For $(A,\cD(A))$ and a measure $\nu \in \cP(\cX)$, we say that $\PR \in \cP(D_\cX(\bR^+))$ solves the \textit{martingale problem} for $(A,\nu)$ if $\PR \, \circ \, \eta(0)^{-1} = \nu$ and if for all $f \in \cD(A)$ 
	\begin{equation*}
	M_f(t) := f(\eta(t)) - f(\eta(0)) - \int_0^t Af(\eta(s)) \dd s
	\end{equation*}
	is a mean $0$ martingale with respect to its natural filtration $\cF_t := \sigma\left(\eta(s) \, | \, s \leq t \right)$ under $\PR$. 
	
	\smallskip
	
	We denote the set of all solutions to the martingale problem, for varying initial measures $\nu$, by $\cM_A$. We say that \textit{uniqueness} holds for the martingale problem if for every $\nu \in \cP(\cX)$ the set $\cM_\nu:= \{\PR \in \cM_A \, | \, \PR \, \circ \, \eta(0)^{-1} = \nu\}$ is empty or a singleton. Furthermore, we say that the martingale problem is \textit{well-posed} if this set contains exactly one element for every $\nu \in \cP(\cX)$.
\end{definition}

For later convenience, we will sometimes also write $A : \cD(A) \subseteq C_b(\cX) \rightarrow C_b(\cX)$ in terms of its graph: $A \subseteq C_b(\cX) \times C_b(\cX)$ where $(f,g) \in A$ if and only if $Af = g$.



\subsection{The strict topology} \label{section:strict_topology}

The strict topology on $C_b(\cX)$ is defined in terms of the \textit{compact-open} topology $\kappa$ on $C_b(\cX)$. This locally convex topology is generated by the semi-norms $p_K(f) := \sup_{x \in K} |f(x)|$, where $K$ ranges over all compact sets in $\cX$.

The \textit{strict} topology $\beta$ on the space bounded continuous functions $C_b(\cX)$  is generated by the semi-norms 
\begin{equation*}
p_{K_n,a_n}(f) := \sup_n a_n \sup_{x \in K_n} |f(x)|
\end{equation*}
varying over non-negative sequences $a_n$ converging to $0$ and sequences of compact sets $K_n \subseteq \cX$.

\begin{remark}
	We refer the reader to the discussion of the strict and sub-strict topology in \cite{Se72}, where it is shown that these two topologies coincide for Polish spaces. Because the definition of the sub-strict topology is more accessible, we use this as a characterisation of the strict topology in our context.
\end{remark}

\begin{remark}
	The strict topology can equivalently be given by the collection of semi-norms
	\begin{equation*} 
	p_g(f) := \vn{fg}
	\end{equation*}
	where $g$ ranges over the set 
	\begin{equation*}
	\left\{g \in C_b(\cX) \, | \, \forall \alpha > 0: \, \left\{x , | \,  |g(x)| \geq \alpha\right\} \text{ is compact in } \cX \right\}.
	\end{equation*}
	See \cite{Bu58} and \cite{Ca11}.
\end{remark}

\subsection{Semigroups} \label{section:semigroups_on_lcs}

We call the family of linear operators $\{T(t)\}_{t \geq 0}$ on $C_b(\cX)$ a \textit{semigroup} if $T(0) = \bONE$ and $T(t)T(s) = T(t+s)$ for $s,t \geq 0$. A family of $\beta$ to $\beta$ continuous operators $\{T(t)\}_{t \geq 0}$ is called a \textit{strongly continuous semigroup} if $t \mapsto T(t)f$ is $\beta$ continuous.

We call $\{T(t)\}_{t \geq 0}$ a \textit{locally equi-continuous} family if for every $t \geq 0$ and $\beta$-continuous semi-norm $p$ on $C_b(\cX)$ there exists a continuous semi-norm $q$ such that ${\sup_{s \leq t} p(T(s)f) \leq q(x)}$ for every $f \in E$. Finally, if $\{T(t)\}_{t \geq 0}$ is a locally equi-continuous strongly continuous semigroup we we say that $\{T(t)\}_{t \geq 0}$ is a \textit{SCLE} semigroup.

\smallskip

We say that the linear map $A \subseteq C_b(\cX) \times C_b(\cX)$ is the \textit{generator} of a SCLE semigroup $\{T(t)\}_{t \geq 0}$ if 
\begin{equation*}
g = \lim_{t \downarrow 0} \frac{T(t)f -f}{t} \quad \Leftrightarrow \quad (f,g) \in \cD(A).
\end{equation*}

If $A,B \subseteq C_b(\cX) \times C_b(\cX)$ are two operators such that $A \subseteq B$, we say that $B$ is an \textit{extension} of $A$.

For more information on \textit{SCLE} semigroups, see \cite{Kr16}.

\begin{remark}
	In the context of Banach spaces a strongly continuous semigroup is automatically locally equi-continuous.
\end{remark}

\section[The transition semigroup is SCLE]{The transition semigroup is strongly continuous and locally equi-continuous with respect to the strict topology} \label{section:martingale_problem_SCLE}

Under density and tightness conditions,Theorems 4.4.2 and 4.5.11 of \cite{EK86} state that if the martingale problem for $A$ has a unique solution on the Skorokhod space of trajectories on a Polish space $\cX$, then the semigroup of conditional expectations maps $C_b(\cX)$ into itself. The first main result of this paper is that this semigroup is in fact strongly continuous and locally equicontinuous for the strict topology $\beta$. 

A version of this result was also stated in time-inhomogeneous form in Theorems 2.12 and 2.13 of \cite{Ca11}, but the result below does not need \textit{sequential $\lambda$ dominance} or the \textit{Korovkin} condition. Note that the notion of a time-homogeneous martingale problem can be turned into a notion for time-homogeneous processes by adjoining a time variable to the state-space (see e.g. \cite[Section 4.7.A]{EK86}).

\begin{theorem}[Theorems 4.4.2 and 4.5.11 of \cite{EK86}] \label{theorem:martingale_problem_gives_semigroup}
	Let $A : \cD(A) \subseteq C_b(\cX) \times C_b(\cX)$ and let the martingale problem for $A$ be well-posed. 
	\begin{itemize}
		\item Suppose that the uniform closure of $\cD(A)$ contains an algebra $D$ that separates points and vanishes nowhere.
		\item Suppose that for all compact $K \subseteq \cX$, $\varepsilon >0$ and $T > 0$, there exists a compact set $K' = K'(K,\varepsilon,T) \subseteq \cX$ such that for all $\PR \in \cM_A$, we have
		\begin{equation} \label{eqn:theorem_compact_containment_set}
		\PR\left[\eta(t) \in K' \text{ for all } t < T, \eta(0) \in K \right] \geq (1-\varepsilon) \PR\left[\eta(0) \in K \right]. 
		\end{equation} 
	\end{itemize}
	Then the measures $\PR \in \cM_A$ correspond to strong Markov processes such that the operators $\{S(t)\}_{t \geq 0}$ on $C_b(\cX)$ defined by $S(t)f(x) = \bE[f(\eta(t))\, | \, \eta(0) = x]$ map $C_b(E)$ into $C_b(\cX)$. 
\end{theorem}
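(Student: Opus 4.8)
The plan is to reconstruct the two-step argument behind Theorems 4.4.2 and 4.5.11 of \cite{EK86}: first promote well-posedness to the (strong) Markov property, and then exploit the tightness afforded by the compact containment condition \eqref{eqn:theorem_compact_containment_set} together with the richness of $D$ to obtain continuity of the initial-condition-to-law map. Throughout I fix, for each $x$, the unique solution $\PR_x$ of the martingale problem for $(A,\delta_x)$.

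To extract the Markov property I would take a regular conditional distribution of an arbitrary solution $\PR \in \cM_A$ given $\cF_t$; a direct computation with the defining martingales $M_f$, $f \in \cD(A)$, shows that the time-$t$ shift of this conditional law solves the martingale problem for $(A,\delta_{\eta(t)})$ for $\PR$-almost every path. Well-posedness then forces this shifted conditional law to coincide with $\PR_{\eta(t)}$, which is exactly the Markov property; replacing deterministic times by stopping times in the same argument (optional sampling of $M_f$) yields the strong Markov property. This identifies $S(t)f(x) = \int f \, \dd(\PR_x \circ \eta(t)^{-1})$ as a genuine transition semigroup, and boundedness $\vn{S(t)f} \le \vn{f}$ is immediate.

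The substantive step is the continuity of $x \mapsto \PR_x$ as a map $\cX \to \cP(D_\cX(\bR^+))$. Given $x_n \to x$, the set $\{x_n\} \cup \{x\}$ is compact, so \eqref{eqn:theorem_compact_containment_set} supplies a single compact set controlling the paths of all $\PR_{x_n}$ up to any horizon $T$. Combined with the fact that, for $f$ in the uniform closure of $\cD(A)$, the process $f(\eta(\cdot))$ carries the martingale $M_f$ with uniformly bounded drift $Af$, this gives relative compactness of the real-valued processes $\{f \circ \eta\}$ in $D_\bR(\bR^+)$; since $D$ is an algebra that separates points and vanishes nowhere, the relative compactness criterion of \cite[Theorem 3.9.1]{EK86} lifts this to relative compactness of $\{\PR_{x_n}\}$ in $\cP(D_\cX(\bR^+))$. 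Any weak limit point has initial law $\delta_x$ and, passing the martingale relations for $f \in \cD(A)$ to the limit, solves the martingale problem for $(A,\delta_x)$; by well-posedness it equals $\PR_x$. As every subsequential limit is $\PR_x$, the whole sequence converges weakly to $\PR_x$.

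Finally I would deduce $S(t) : C_b(\cX) \to C_b(\cX)$. Boundedness is clear, so only continuity remains: for $x_n \to x$ and $f \in C_b(\cX)$ we need $\int f\,\dd(\PR_{x_n}\circ\eta(t)^{-1}) \to \int f\,\dd(\PR_x\circ\eta(t)^{-1})$. The evaluation map $\eta \mapsto \eta(t)$ is continuous on the Skorokhod space precisely at paths continuous at $t$, so by the continuous mapping theorem the desired convergence holds whenever $\PR_x(\eta(t) \neq \eta(t-)) = 0$, i.e. for all $t$ outside an at most countable set of fixed discontinuities. This treatment of the exceptional jump times is the delicate point: to cover every $t$ I would invoke the right continuity of paths, which makes $t \mapsto S(t)f(x)$ right continuous, together with the uniform control of the Skorokhod modulus of continuity coming from the tightness established above, so as to pass from the co-countable set of good times to all $t \geq 0$. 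I expect this interchange of the limits $x_n \to x$ and $s \downarrow t$ to be the main obstacle, since it is exactly where the fixed-time evaluation on $D_\cX(\bR^+)$ fails to be continuous and where the compact containment hypothesis must be used a second time.
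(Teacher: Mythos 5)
This theorem is not proved in the paper at all: it is quoted verbatim as Theorems 4.4.2 and 4.5.11 of Ethier and Kurtz, so your attempt should be measured against the standard proofs there. Your first two steps reproduce that route faithfully: regular conditional distributions combined with well-posedness give the (strong) Markov property, and the compact containment condition \eqref{eqn:theorem_compact_containment_set} together with the algebra $D$ gives, via the relative compactness criteria of Chapter 3 of \cite{EK86} (Theorems 3.9.1 and 3.9.4), relative compactness of $\{\PR_{x_n}\}$, with every limit point identified as $\PR_x$ by passing the martingale relations to the limit and invoking uniqueness. One small omission there: the measurability of $x \mapsto \PR_x$, which you need even to state the strong Markov property and to define $S(t)f$, has to come from somewhere --- in \cite{EK86} it is Theorem 4.4.6, or a posteriori from the continuity you establish.

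The genuine gap is in your final paragraph, and the ``delicate point'' you flag is in fact a known non-issue. Theorem 4.3.12 of \cite{EK86} states that for a well-posed martingale problem every solution satisfies $\PR_x[\eta(t) = \eta(t-)] = 1$ for \emph{every} fixed $t > 0$, not merely outside a countable exceptional set (the countable set is the correct statement for a general c\`adl\`ag law; uniqueness upgrades it to all $t$). Since the continuous mapping theorem only requires the \emph{limit} law $\PR_x$ to put zero mass on the discontinuity set of the evaluation $\eta \mapsto \eta(t)$, weak convergence $\PR_{x_n} \rightarrow \PR_x$ then immediately yields $S(t)f(x_n) \rightarrow S(t)f(x)$ for every $t \geq 0$: no interchange of the limits $x_n \rightarrow x$ and $s \downarrow t$, and no second use of compact containment, is needed. (The paper itself invokes exactly this Theorem 4.3.12 in the proof of Lemma \ref{lemma:semigroup_strongly_continuous}.) Moreover, the repair you sketch is doubtful as stated: tightness on $D_\cX(\bR^+)$ controls the modified Skorokhod modulus, which tolerates large jumps at random times, so it does not provide the uniform-in-$n$ continuity of $s \mapsto \bE_{x_n}[f(\eta(s))]$ near the fixed time $t$ that your interchange-of-limits argument would require. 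With Theorem 4.3.12 substituted for that paragraph, your reconstruction matches the cited proof.
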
 

\begin{theorem}\label{theorem:martingale_problem_gives SCLE_semigroup}
	Consider the context of Theorem \ref{theorem:martingale_problem_gives_semigroup}. The semigroup $\{S(t)\}_{t \geq 0}$ of conditional expectations is a $\beta$-SCLE semigroup. The generator of $\{S(t)\}_{t \geq 0}$ is an extension of $A$.
\end{theorem}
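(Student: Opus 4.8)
The plan is to verify the semigroup, continuity, strong-continuity and generator assertions in turn, leaning on two standard structural features of the strict topology (Appendix \ref{appendix:properties_strict_topology}): that $\beta$ coincides with the compact-open topology $\kappa$ on $\vn{\cdot}$-bounded subsets of $C_b(\cX)$, and that, consequently, a family of operators on $C_b(\cX)$ is $\beta$-equi-continuous exactly when it is $\vn{\cdot}$-bounded and $\kappa$-equi-continuous on the unit ball. Write $\PR_x$ for the unique solution started in $\delta_x$ and $\mu_{t,x} := \PR_x \circ \eta(t)^{-1}$, so that $S(t)f(x) = \int f \dd \mu_{t,x}$. The operators $S(t)$ are linear and positive with $S(t)\bONE = \bONE$, hence $\vn{\cdot}$-contractions, and the identities $S(0) = \bONE$, $S(t)S(s) = S(t+s)$ follow from the Markov property supplied by Theorem \ref{theorem:martingale_problem_gives_semigroup}. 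As the analytic input I would first specialise \eqref{eqn:theorem_compact_containment_set} to $\PR_x$ with $x \in K$, where $\PR_x[\eta(0) \in K] = 1$: this produces, for every compact $K$, every $\varepsilon > 0$ and every $T$, a compact $K'$ with $\mu_{t,x}(\cX \setminus K') \le \varepsilon$ for all $x \in K$ and $t < T$.

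Next I would establish local equi-continuity. By the characterisation above it suffices, for $\vn{f} \le 1$, to control $\sup_{t < T}\sup_{x \in K}|S(t)f(x)|$ by a compact-open seminorm of $f$ up to a small error. Splitting $\int |f| \dd \mu_{t,x}$ over $K'$ and its complement and inserting the tightness estimate gives $\sup_{t < T} p_K(S(t)f) \le p_{K'}(f) + \varepsilon$ whenever $\vn{f} \le 1$; rescaling by linearity yields $\kappa$-equi-continuity of $\{S(t) : t < T\}$ on the unit ball, and combined with the contraction bound this is precisely $\beta$-equi-continuity for every $T$. In particular each $S(t)$ is $\beta$-continuous. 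I expect this step — turning the probabilistic compact-containment condition \eqref{eqn:theorem_compact_containment_set} into $\beta$-equi-continuity, and in particular absorbing the $\vn{f}$-weighted tail term by restricting to the unit ball — to be the crux of the argument, and the point at which identifying $\beta$ as the correct topology pays off.

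For strong continuity I would use the semigroup property together with the local equi-continuity just obtained to reduce to right-continuity at $0$, i.e.\ to $S(s)f \to f$ in $\beta$ as $s \downarrow 0$. For $f \in \cD(A)$, taking expectations in the martingale $M_f$ and applying Fubini gives $S(s)f - f = \int_0^s S(r)Af \dd r$, so that $\vn{S(s)f - f} \le s\vn{Af} \to 0$; approximating uniformly by elements of $\cD(A)$ then extends $\vn{S(s)h - h} \to 0$ to every $h$ in the uniform closure of $\cD(A)$, in particular to $h \in D$. For arbitrary $f \in C_b(\cX)$ I would combine the $\beta$-density of $D$ — which holds by the Stone–Weierstrass theorem for the strict topology, since $D$ is a point-separating, nowhere-vanishing subalgebra — with local equi-continuity in the estimate $p(S(s)f - f) \le p(S(s)(f - h)) + p(S(s)h - h) + p(h - f)$, bounding the outer terms by a single $\beta$-seminorm of $f - h$ and the middle term by $\vn{S(s)h - h}$.

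Finally, that the generator extends $A$ follows from the same identity: for $(f, Af) \in A$ we have $t^{-1}(S(t)f - f) = t^{-1}\int_0^t S(s)Af \dd s$, and since $s \mapsto S(s)Af$ is $\beta$-continuous the average converges to $S(0)Af = Af$, because $p\bigl(t^{-1}\int_0^t (S(s)Af - Af)\dd s\bigr) \le \sup_{s \le t} p(S(s)Af - Af) \to 0$ for every $\beta$-continuous seminorm $p$. Hence $(f, Af)$ belongs to the generator, so that the generator is an extension of $A$.
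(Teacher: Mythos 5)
Your proposal is correct, but apart from the equi-continuity step it takes a genuinely different route from the paper, so a comparison is in order. For local equi-continuity you argue exactly as the paper's Lemma \ref{lemma:semigroup_is_for_every_time_continuous}: condition \eqref{eqn:theorem_compact_containment_set} gives tightness of the transition kernels $\{\mu_{t,x} \,|\, t < T,\, x \in K\}$, the integral is split over $K'$ and its complement, and the structural facts of Theorem \ref{theorem:propertiesCbstrict} upgrade the resulting estimate to $\beta$-equi-continuity; one small caveat is that your opening ``characterisation'' of $\beta$-equi-continuity (norm-boundedness plus $\kappa$-equi-continuity on the unit ball) is not literally in the appendix and should be justified the way the paper does it, via the sequential criterion of Theorem \ref{theorem:propertiesCbstrict} (c) combined with (d) and the weighted-sup form of the $\beta$-seminorms --- (d) alone only identifies the topologies on bounded sets; the direction you need does follow by this argument. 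Where you genuinely diverge is in the remaining two steps. The paper's Lemma \ref{lemma:semigroup_strongly_continuous} proves strong continuity by reducing it to \emph{weak} continuity via Proposition 3.5 of \cite{Kr16} (using completeness and the strong Mackey property of $(C_b(\cX),\beta)$) and verifies weak continuity probabilistically, through $\PR[\eta(t) = \eta(t-)] = 1$ (Theorem 4.3.12 of \cite{EK86}) and dominated convergence; you instead extract $\vn{S(s)f - f} \leq s \vn{Af}$ for $f \in \cD(A)$ from the mean-zero martingale identity, pass to the uniform closure and hence to the algebra $D$, and finish with $\beta$-density of $D$ (Stone--Weierstrass, Theorem \ref{theorem:propertiesCbstrict} (e)), local equi-continuity and a three-seminorm estimate. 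Likewise, the paper's Proposition \ref{proposition:generator_extends_martingale_problem_operator} proves the generator statement by a direct uniform-on-compacts estimate that needs weak compactness of $\{\PR_x \,|\, x \in K\}$ and a uniform path-modulus bound (Lemma 4.5.17 and Theorem 3.7.2 of \cite{EK86}), whereas you average the orbit, $p\bigl(t^{-1}\int_0^t (S(s)Af - Af)\dd s\bigr) \leq \sup_{s \leq t} p(S(s)Af - Af)$ --- an inequality that is indeed valid here because every $\beta$-seminorm is a weighted supremum of pointwise evaluations, so no vector-valued integration theory is needed --- and then quote the strong continuity already proved (no circularity: that was obtained for all of $C_b(\cX)$, in particular for $Af$). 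Your route is essentially the classical Banach-space argument transplanted to $\beta$: it is more elementary and self-contained, avoiding both the weak-to-strong upgrade of \cite{Kr16} and the path-regularity results of \cite{EK86}, and it puts the density hypothesis on $\cD(A)$ to direct use. What the paper's route buys in exchange is the weak-continuity statement itself, a quantitative uniform-on-compacts convergence in the generator step, and an argument whose last two steps do not lean on the approximation structure of $\cD(A)$ but on general properties of solutions to well-posed martingale problems.
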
 

The result follows from Lemmas \ref{lemma:semigroup_is_for_every_time_continuous} and \ref{lemma:semigroup_strongly_continuous} and Proposition \ref{proposition:generator_extends_martingale_problem_operator} below.

\begin{lemma} \label{lemma:semigroup_is_for_every_time_continuous}
	Let $\{S(t)\}_{t \geq 0}$ be the semigroup introduced in Theorem \ref{theorem:martingale_problem_gives SCLE_semigroup}. The family $\{S(t)\}_{t \geq 0}$ is locally equi-continuous for $\beta$.
\end{lemma}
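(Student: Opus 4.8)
The plan is to exploit the probabilistic representation $S(s)f(x) = \bE_x[f(\eta(s))]$, where $\bE_x$ denotes expectation under the unique solution $\PR_x \in \cM_A$ started from $\delta_x$, together with two structural facts: that each $S(s)$ is a positive $\vn{\cdot}$-contraction, so that $|S(s)f(x)| \le \bE_x[|f(\eta(s))|]$ pointwise, and the compact containment estimate \eqref{eqn:theorem_compact_containment_set}. I will describe $\beta$ through the seminorms $p_g(f) = \vn{fg}$ from the remark in Section \ref{section:strict_topology}, taking $g \ge 0$ with $\vn{g}\le 1$ without loss of generality, and I will use the description of $\beta$-continuity recalled in Appendix \ref{appendix:properties_strict_topology}: a seminorm $q$ on $C_b(\cX)$ is $\beta$-continuous provided $q \le C\vn{\cdot}$ for some constant $C$ and $q(f_n) \to 0$ for every $\vn{\cdot}$-bounded sequence $f_n$ converging to $0$ uniformly on compacts.

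Fix $t \ge 0$ and a seminorm $p_g$. The candidate dominating seminorm is
\[
Q(f) := \sup_{s \le t}\, \sup_{x \in \cX} g(x)\, \bE_x\big[|f(\eta(s))|\big].
\]
Since $\bE_x[|f(\eta(s))|] \le \vn{f}$, this is a well-defined seminorm with $Q \le \vn{g}\,\vn{\cdot} \le \vn{\cdot}$, and by positivity and contractivity $p_g(S(s)f) = \sup_x g(x)\,|\bE_x[f(\eta(s))]| \le Q(f)$ for every $s \le t$; hence $\sup_{s\le t} p_g(S(s)f) \le Q(f)$. It therefore remains only to verify that $Q$ is $\beta$-continuous, for which, by the above, it suffices to check the sequential condition.

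So let $f_n$ be $\vn{\cdot}$-bounded, say $\vn{f_n} \le M$, with $f_n \to 0$ uniformly on compacts, and let me show $Q(f_n)\to 0$. Fix $\delta, \varepsilon > 0$. Because $g$ lies in the class defining $\beta$, the set $C_\delta := \{g \ge \delta\}$ is compact, and for $x \notin C_\delta$ one has $g(x)\,\bE_x[|f_n(\eta(s))|] \le \delta M$ for all $n$ and $s$. For $x \in C_\delta$ I apply \eqref{eqn:theorem_compact_containment_set} with $K = C_\delta$, tolerance $\varepsilon$ and horizon $T > t$ to obtain a single compact set $K' = K'(C_\delta,\varepsilon,T)$ with $\PR_x[\exists\, u < T : \eta(u) \notin K'] \le \varepsilon$ for every $x \in C_\delta$; splitting according to whether the path stays in $K'$ up to time $T$ (which forces $\eta(s) \in K'$ for $s \le t < T$) yields
\[
\bE_x\big[|f_n(\eta(s))|\big] \le \sup_{y \in K'}|f_n(y)| + M\varepsilon, \qquad x \in C_\delta,\ s \le t.
\]
Combining the two ranges of $x$ gives $Q(f_n) \le \max\{\delta M,\ \sup_{K'}|f_n| + M\varepsilon\}$, and since $K'$ is a fixed compact set, $\sup_{K'}|f_n| \to 0$. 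Letting $n \to \infty$ and then $\delta,\varepsilon \downarrow 0$ produces $Q(f_n) \to 0$, so $Q$ is $\beta$-continuous and the lemma follows.

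I expect the only genuinely delicate point to be the interplay between the sup norm and the strict topology. A naive single split of $\bE_x[|f(\eta(s))|]$ into a compact part and a tail produces a term proportional to $\vn{f}$, which is \emph{not} controlled by any $\beta$-continuous seminorm, so a direct estimate $\sup_{s\le t}p_g(S(s)f)\le q(f)$ cannot be read off term by term. The device that resolves this is to phrase the bound as the $\beta$-continuity of the auxiliary seminorm $Q$ and to test it only on $\vn{\cdot}$-bounded sequences, where the offending $\vn{f}$ is frozen at the uniform bound $M$ and the escape probability $\varepsilon$ can be sent to $0$ afterwards. Verifying the hypotheses of the $\beta$-continuity criterion — in particular recognising that compact containment is precisely what upgrades uniform-on-compacts convergence of a bounded sequence into convergence of $Q$ — is thus the crux, while the surrounding manipulations are routine.
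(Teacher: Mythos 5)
Your proof is correct, and its probabilistic core is exactly the paper's: you use the compact containment condition \eqref{eqn:theorem_compact_containment_set} (applied to $\PR_x$, $x$ in a compact set, with horizon $T>t$) to split $\bE_x[|f_n(\eta(s))|]$ into a part where the path stays in a fixed compact $K'$, controlled by uniform-on-compacts convergence of the bounded sequence $f_n$, and an escape part of probability at most $\varepsilon$, controlled by the frozen bound $M$. The difference is the functional-analytic packaging. The paper works with the operator family directly: it takes an arbitrary $\beta$-convergent sequence $f_n \to f$, shows $S(s)f_n \to S(s)f$ in the compact-open topology uniformly in $s \le T$ together with a uniform norm bound, and concludes via Theorem \ref{theorem:propertiesCbstrict} (c) (sequential equi-continuity of a family implies equi-continuity) and (d). You instead fix a generating seminorm $p_g$, build the explicit dominating seminorm $Q(f) = \sup_{s\le t}\sup_x g(x)\,\bE_x[|f(\eta(s))|]$, and reduce the lemma to $\beta$-continuity of this single seminorm; the weight $g$ then costs you the extra level-set step $C_\delta = \{g \ge \delta\}$, which has no counterpart in the paper. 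What your route buys is a clean separation between the trivial domination $\sup_{s\le t} p_g(S(s)f) \le Q(f)$ and the substantive continuity of $Q$. The one point you should make explicit is that the sequential criterion you invoke for seminorms is not literally in Appendix \ref{appendix:properties_strict_topology}: Theorem \ref{theorem:propertiesCbstrict} (c) is stated for families of \emph{linear} maps. The bridge is short but should be stated: by Hahn--Banach, $Q(f) = \sup\{|\ell(f)| \,:\, \ell \text{ linear},\ |\ell| \le Q\}$, so your condition that $Q(f_n)\to 0$ along norm-bounded, compactly-null sequences says precisely (via (d), which identifies such sequences as the $\beta$-null ones) that this family of functionals is sequentially equi-continuous, whence (c) yields equi-continuity of the family, i.e.\ $\beta$-continuity of $Q$. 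Finally, since the $p_g$ generate $\beta$, verifying the domination for each $p_g$ does suffice for local equi-continuity as defined in Section \ref{section:semigroups_on_lcs}, so with the bridge added your argument is complete.
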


\begin{proof}
	Fix $T \geq 0$. We will prove that $\{S(t)\}_{t \leq T}$ is $\beta$ equi-continuous by using Theorem \ref{theorem:propertiesCbstrict} (c) and (d). By (c) it suffices to establish sequential $\beta$ equi-continuity. Thus, pick a sequence $f_n$ converging to $f$ with respect to $\beta$. We aim to prove that $S(t)f_n$ converges to $S(t)f$ for $\beta$ uniformly for $t \leq T$.
	
	By Theorem \ref{theorem:propertiesCbstrict} (d), the strict topology equals the compact open topology on norm bounded sets. As $f_n$ converges to $f$ for the strict topology, we find $\sup_n \vn{f_n} \leq \infty$, which directly implies that $\sup_n \sup_{t \leq T} \vn{S(t)f_n} < \infty$. It therefore suffices to establish that $S(t)f_n$ converges to $S(t)f$ for the compact open topology, uniformly for $t \leq T$.
	
	\smallskip
	
	As $\beta$ is stronger than the compact-open topology, we have $f_n \rightarrow f$ uniformly on compact sets. Fix $\varepsilon > 0$ and a compact set $K \subseteq \cX$, and let $\hat{K}$ be the set introduced in Equation \eqref{eqn:theorem_compact_containment_set} for $T$. Then we obtain that
	\begin{align*}
	& \sup_{t \leq T} \sup_{x \in K} \left|S(t)f(x) - S(t)f_n(x) \right| \\
	& \leq \sup_{t \leq T}\sup_{x \in K}  \bE_x\left|f(\eta(t)) - f_n(\eta(t)) \right| \\
	& \leq  \sup_{t \leq T} \sup_{x \in K}  \bE_x\left|\left(f(\eta(t)) - f_n(\eta(t)) \right) \bONE_{\{\eta(t) \in \hat{K}\}} \right. \\
	& \qquad \qquad \qquad \qquad \qquad \left. + \left(f(\eta(t)) - f_n(\eta(t)) \right) \bONE_{\{\eta(t) \in \hat{K}^c\}} \right| \\
	& \leq \sup_{t \leq T} \sup_{y \in \hat{K}} \left|f(y) - f_n(y)\right| +  \sup_n \vn{f_n - f} \varepsilon.
	\end{align*}
	As $n \rightarrow \infty$ this quantity is bounded by $\sup_n \vn{f_n - f} \varepsilon$ as $f_n$ converges to $f$ uniformly on compacts. As $\varepsilon$ was arbitrary, we are done.
\end{proof}

\begin{lemma} \label{lemma:semigroup_strongly_continuous}
	Let $\{S(t)\}_{t \geq 0}$ be the semigroup introduced in Theorem \ref{theorem:martingale_problem_gives SCLE_semigroup}. Then $\{S(t)\}_{t \geq 0}$ is $\beta$ strongly continuous.
\end{lemma}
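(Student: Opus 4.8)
The plan is to first establish right-continuity at the origin, namely that $S(t)f \to f$ in $\beta$ as $t \downarrow 0$, on a $\beta$-dense set of functions on which the martingale problem provides direct quantitative control, and then to propagate this to all of $C_b(\cX)$ using the local equi-continuity already secured in Lemma \ref{lemma:semigroup_is_for_every_time_continuous}. Once right-continuity at $0$ is known, the semigroup property upgrades it to continuity of $t \mapsto S(t)f$ on all of $\bR^+$, which is what strong continuity requires.

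First I would use the martingale problem itself. Fix $f \in \cD(A)$. Taking expectations in the mean-zero martingale $M_f$ under the (unique) solution started at $x$, and applying Fubini to the bounded integrand $Af$, yields the representation
\begin{equation*}
S(t)f(x) - f(x) = \int_0^t S(s)Af(x) \, \dd s,
\end{equation*}
valid for every $x \in \cX$. Since each $S(s)$ is a conditional expectation, $\vn{S(s)Af} \leq \vn{Af}$, so $\vn{S(t)f - f} \leq t \vn{Af} \to 0$; thus $S(t)f \to f$ uniformly, and a fortiori for $\beta$. A routine $3\varepsilon$-estimate, using that each $S(t)$ is a sup-norm contraction, extends this norm convergence to every $f$ in the uniform closure of $\cD(A)$, and in particular to every $f$ in the algebra $D$.

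Next I would pass from $D$ to all of $C_b(\cX)$. Because $D$ is an algebra that separates points and vanishes nowhere, the Stone--Weierstrass theorem for the strict topology (see Appendix \ref{appendix:properties_strict_topology}) makes $D$ $\beta$-dense in $C_b(\cX)$. Fix $f \in C_b(\cX)$ and a $\beta$-continuous seminorm $p$; by Lemma \ref{lemma:semigroup_is_for_every_time_continuous} there is a continuous seminorm $q$ with $\sup_{s \leq 1} p(S(s)h) \leq q(h)$ for all $h$. Choosing $g \in D$ with $(p+q)(f-g)$ small and splitting
\begin{align*}
p(S(t)f - f) &\leq p\bigl(S(t)(f-g)\bigr) + p(S(t)g - g) + p(g - f) \\
&\leq q(f-g) + p(S(t)g - g) + p(f-g),
\end{align*}
the middle term tends to $0$ as $t \downarrow 0$ by the previous step (here one uses that every $\beta$-continuous seminorm is dominated by the sup-norm, so norm convergence suffices), while the outer terms are small by the choice of $g$. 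Letting $t \downarrow 0$ and then $g \to f$ gives $S(t)f \to f$ in $\beta$.

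Finally I would upgrade to continuity at arbitrary $t$. Writing $S(t+h)f - S(t)f = S(t)\bigl(S(h)f - f\bigr)$ for $h \downarrow 0$ and $S(t)f - S(t-h)f = S(t-h)\bigl(S(h)f - f\bigr)$ for $h \uparrow 0$, and bounding both through the equi-continuity estimate $p(S(s)\,\cdot\,) \leq q(\,\cdot\,)$ of Lemma \ref{lemma:semigroup_is_for_every_time_continuous}, each difference is controlled by $q\bigl(S(h)f - f\bigr) \to 0$, which is precisely the right-continuity at the origin applied with the seminorm $q$. The main obstacle is the step from $D$ to general $f$: the pointwise limit $S(t)f(x) \to f(x)$ coming from right-continuity of the Skorokhod paths is not uniform on compacta for an arbitrary $f$, and it is exactly the interplay of the $\beta$-density of $D$ with the local equi-continuity of $\{S(t)\}_{t \geq 0}$ that supplies the missing uniformity.
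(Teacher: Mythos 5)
Your proof is correct, but it takes a genuinely different route from the paper's. The paper does not work on a core at all: it invokes Proposition 3.5 of \cite{Kr16}, by which completeness and the strong Mackey property of $(C_b(\cX),\beta)$ (Theorem \ref{theorem:propertiesCbstrict}) together with the local equi-continuity of Lemma \ref{lemma:semigroup_is_for_every_time_continuous} reduce $\beta$-strong continuity to \emph{weak} continuity, i.e.\ continuity of $t \mapsto \ip{S(t)f}{\mu}$ for every $\mu \in \cM(\cX)$; weak continuity is then verified probabilistically, via a Hahn--Jordan decomposition, the fact (Theorem 4.3.12 of \cite{EK86}) that the solution started from $\mu^+$ satisfies $\PR[\eta(t) = \eta(t-)] = 1$ for each fixed $t > 0$, and dominated convergence. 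Your argument stays entirely on the function side: the mean-zero martingale yields the Dynkin identity $S(t)f - f = \int_0^t S(s)Af \, \dd s$ and hence the quantitative bound $\vn{S(t)f - f} \leq t \vn{Af}$ on $\cD(A)$; a $3\varepsilon$-argument extends norm right-continuity at $0$ to the uniform closure of $\cD(A)$ and so to $D$; the strict Stone--Weierstrass theorem (Theorem \ref{theorem:propertiesCbstrict}(e)) plus the equi-continuity estimate $p(S(s)\,\cdot\,) \leq q(\,\cdot\,)$ transfer this to all of $C_b(\cX)$ --- your use of the combined seminorm $p+q$ in the approximation step is exactly what closes this part soundly --- and the semigroup identity handles arbitrary $t$, where again local equi-continuity is essential for the left-continuity estimate $p\bigl(S(t-h)(S(h)f - f)\bigr) \leq q(S(h)f - f)$. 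Each approach buys something: the paper's is shorter given the cited machinery, delivers two-sided continuity in one stroke, and does not need the Stone--Weierstrass density step (the density hypothesis enters its argument only through the separating property underlying Theorem 4.3.12 of \cite{EK86}); yours is more elementary and self-contained, avoids the abstract Mackey-space result of \cite{Kr16} entirely, and gives the strictly stronger quantitative conclusion that right-continuity at $0$ holds in sup-norm, not merely in $\beta$, on the core. Both proofs lean on Lemma \ref{lemma:semigroup_is_for_every_time_continuous} in an essential way, and your closing remark correctly identifies why density alone would not suffice without it.
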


For the proof, we  recall  the notion of a weakly continuous semigroup. A semigroup is \textit{weakly continuous} if for all $f \in C_b(\cX)$ and $\mu \in \cM(\cX)$ the trajectory $t \mapsto \ip{S(t)f}{\mu}$ is continuous in $\bR$.

\begin{proof}[Proof of Lemma \ref{lemma:semigroup_strongly_continuous}]
	We first establish that it suffices to prove weak continuity of the semigroup by an application of Proposition 3.5 in \cite{Kr16}. To apply this proposition, note that we have  completeness and the strong Mackey property of $(C_b(\cX),\beta)$ by Theorem \ref{theorem:propertiesCbstrict} and local equi-continuity of the semigroup $\{S(t)\}_{t \geq 0}$ by Lemma \ref{lemma:semigroup_is_for_every_time_continuous}.
	
	Thus, we establish weak continuity. Let $f \in C_b(\cX)$ and $\mu \in \cM(\cX)$. Write $\mu$ as the Hahn-Jordan decomposition: $\mu = c^+ \mu^+ - c^-\mu^-$, where $c^+,c^- \geq 0$ such that $\mu^+, \mu^- \in \cP(\cX)$. It thus suffices to show that $t \mapsto \ip{S(t)f}{\mu^+}$ and $t \mapsto \ip{S(t)f}{\mu^-}$ are continuous. Clearly, it suffices to do this for either of the two.
	
	\smallskip
	
	Let $\PR$ be the unique solution to the martingale problem for $A$ started in $\mu^+$. It follows by Theorem 4.3.12 in \cite{EK86} that $\PR[X\eta(t) = \eta(t-)] = 1$ for all $t > 0$. Fix some $t \geq 0$, we show that our trajectory is continuous at this specific $t$. Note that
	\begin{equation*}
	\left|\ip{S(t)f}{\mu^+} - \ip{S(t+h)f}{\mu^+} \right|  \leq \bE^\PR\left|f(\eta(t) - f(\eta(t+h)) \right|. 
	\end{equation*}
	By the almost sure convergence of $\eta(t+h) \rightarrow \eta(t)$ as $h \rightarrow 0$, and the boundedness of $f$, we obtain by the dominated convergence theorem that this difference converges to $0$ as $h \rightarrow 0$. As $t \geq 0$ was arbitrary, the trajectory is continuous at all $t \geq 0$. 
\end{proof}

\begin{proposition} \label{proposition:generator_extends_martingale_problem_operator}
	Let $\{S(t)\}_{t \geq 0}$ be the semigroup introduced in Theorem \ref{theorem:martingale_problem_gives SCLE_semigroup} and let $\hat{A}$ be the generator of this semigroup. Then $\hat{A}$ is an extension of $A$.
\end{proposition}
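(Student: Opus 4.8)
The plan is to show that if $(f,g) \in A$, meaning $f \in \cD(A)$ and $Af = g$, then $f$ is in the domain of the generator $\hat A$ and $\hat A f = g$, i.e.\ that $\lim_{t \downarrow 0} t^{-1}(S(t)f - f) = g$ in the strict topology $\beta$. The natural starting point is the defining property of the martingale problem: for the process $\eta$ started at $x$,
\begin{equation*}
M_f(t) = f(\eta(t)) - f(\eta(0)) - \int_0^t Af(\eta(s)) \dd s
\end{equation*}
is a mean-zero martingale under $\PR_x$. Taking $\bE_x$ and using that $Af = g$ yields the exact identity
\begin{equation*}
S(t)f(x) - f(x) = \int_0^t S(s)g(x) \dd s
\end{equation*}
for every $x$, so that
\begin{equation*}
\frac{S(t)f(x) - f(x)}{t} - g(x) = \frac{1}{t}\int_0^t \bigl(S(s)g(x) - g(x)\bigr) \dd s.
\end{equation*}
It remains to show that the right-hand side converges to $0$ in $\beta$ as $t \downarrow 0$.

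First I would justify the integral identity carefully, since it is the workhorse of the proof. For each fixed $x$ the martingale property gives $\bE_x[M_f(t)] = 0$, and by Fubini (justified by boundedness of $g$ and finiteness of $t$) $\bE_x \int_0^t g(\eta(s)) \dd s = \int_0^t S(s)g(x) \dd s$, yielding the displayed identity pointwise in $x$. Next I would pass from this pointwise statement to convergence in the strict topology. The key input is strong continuity of the semigroup, established in Lemma \ref{lemma:semigroup_strongly_continuous}: the map $s \mapsto S(s)g$ is $\beta$-continuous, so $S(s)g \to g$ in $\beta$ as $s \downarrow 0$. Since $\beta$ is generated by semi-norms, for any $\beta$-continuous semi-norm $p$ one has $p(S(s)g - g) \to 0$ as $s \downarrow 0$, and averaging gives
\begin{equation*}
p\!\left(\frac{1}{t}\int_0^t \bigl(S(s)g - g\bigr) \dd s\right) \leq \frac{1}{t}\int_0^t p\bigl(S(s)g - g\bigr) \dd s \longrightarrow 0,
\end{equation*}
which is the desired conclusion.

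The main obstacle is handling the vector-valued (Riemann) integral $\int_0^t S(s)g \dd s$ rigorously in the locally convex space $(C_b(\cX),\beta)$ rather than merely pointwise in $x$. Two points need care: that $s \mapsto S(s)g$ is $\beta$-integrable on $[0,t]$, and that the semi-norm may be moved inside the integral. Both follow from $\beta$-continuity of $s \mapsto S(s)g$ together with local equi-continuity from Lemma \ref{lemma:semigroup_is_for_every_time_continuous}: on the compact interval $[0,t]$ the orbit $\{S(s)g : s \leq t\}$ is $\beta$-bounded and $\beta$-continuous, so the Riemann integral exists in the completion of $(C_b(\cX),\beta)$, and completeness (Theorem \ref{theorem:propertiesCbstrict}) ensures it lands back in $C_b(\cX)$ and agrees with the pointwise integral. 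The semi-norm estimate is then the standard inequality $p(\int_0^t h(s)\dd s) \leq \int_0^t p(h(s))\dd s$ for continuous integrands. Once these technical points are in place, the convergence in $\beta$ is immediate, giving $\hat A f = g$ and hence $A \subseteq \hat A$.
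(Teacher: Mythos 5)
Your proof is correct, but it takes a genuinely different route from the paper. The paper never invokes the semigroup identity $S(t)f - f = \int_0^t S(s)g \, \dd s$ at the level of the space $(C_b(\cX),\beta)$; instead it works directly with the probabilistic representation $\frac{S(t)f(x)-f(x)}{t} = \bE_x\bigl[\frac{1}{t}\int_0^t g(\eta(s))\dd s\bigr]$ and verifies the two conditions of Theorem \ref{theorem:propertiesCbstrict} (d) by hand: a uniform norm bound $\sup_t \vn{t^{-1}(S(t)f-f)} \leq \vn{g}$, and uniform convergence to $g$ on compact sets, the latter obtained from the compact containment condition \eqref{eqn:theorem_compact_containment_set} together with weak compactness of $\{\PR_x \,|\, x \in K\}$ (Lemma 4.5.17 of \cite{EK86}) and the resulting uniform control of the modulus of continuity of paths (Theorem 3.7.2 of \cite{EK86}). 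Your argument instead leverages Lemma \ref{lemma:semigroup_strongly_continuous}, which is legitimate — that lemma is proved independently of the proposition, so there is no circularity — and reduces everything to the abstract averaging estimate $p\bigl(\frac{1}{t}\int_0^t (S(s)g - g)\dd s\bigr) \leq \frac{1}{t}\int_0^t p(S(s)g - g)\dd s$ plus $\beta$-continuity of $s \mapsto S(s)g$ at $s = 0$. What your approach buys is brevity and conceptual clarity: it isolates the statement as the standard semigroup-theoretic fact that the generator contains any operator satisfying the integrated identity, with no repetition of tightness estimates. The paper's approach buys self-containedness at the probabilistic level and avoids the vector-valued integral. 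On that last point, note that your main technical worry can be dissolved: you do not actually need the $\beta$-valued Riemann integral at all. Since the identity $\frac{S(t)f(x)-f(x)}{t} - g(x) = \frac{1}{t}\int_0^t (S(s)g(x) - g(x))\dd s$ holds pointwise, for a seminorm $p = p_{K_n,a_n}$ one has, for each $n$ and $x \in K_n$, the bound $a_n \, \bigl|\frac{1}{t}\int_0^t (S(s)g(x)-g(x))\dd s\bigr| \leq \frac{1}{t}\int_0^t p(S(s)g - g)\dd s$, and taking the supremum over $x \in K_n$ and $n$ gives the averaging estimate directly from the scalar integrals, with $s \mapsto p(S(s)g - g)$ continuous by Lemma \ref{lemma:semigroup_strongly_continuous}. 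Completeness of $(C_b(\cX),\beta)$ is then not needed anywhere.
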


\begin{proof}
	Let $f \in \cD(A)$, we prove that $f \in \cD(\hat{A})$. We again use the characterisation of $\beta$ convergence as given in Theorem \ref{theorem:propertiesCbstrict} (d). From this point onward, we write $g := Af$ to ease the notation.
	
	First, $\sup_{t} \vn{\frac{S(t) f - f}{t}} \leq \vn{g}$ as
	\begin{equation*}
	\frac{S(t)f(x) - f(x)}{t} = \bE_x\left[\frac{f(\eta(t)) - f(x)}{t}\right] = \bE_x \left[\frac{1}{t} \int_0^t g(\eta(s)) \dd s \right].
	\end{equation*}
	Second, we show that we have uniform convergence of $\frac{S(t) f - f}{t}$ to $g$ as $t \downarrow 0$ on compacts sets. So pick $K \subseteq \cX$ compact. Now choose $\varepsilon > 0$ arbitrary, and let $\hat{K} = \hat{K}(K,\varepsilon,1)$ as in \eqref{eqn:theorem_compact_containment_set}.
	\begin{align}
	& \sup_{x \in K} \left|\frac{S(t)f(x) - f(x)}{t} - g(x)\right|\notag \\ 
	& \leq \sup_{x \in K}  \bE_x \left[ \left|\frac{1}{t} \int_0^t g(\eta(s)) - g(x) \dd s \right| \right] \notag \\
	& \leq \sup_{x \in K}  \bE_x\left[ \bONE_{\{\eta(s) \in \hat{K} \text{ for } s \leq 1\}} \left|\frac{1}{t} \int_0^t g(\eta(s)) - g(x) \dd s \right| \right] \notag \\
	& \quad + \sup_{x \in K}  \bE_x \left[ \bONE_{\{\eta(s) \notin \hat{K} \text{ for } s \leq 1\}} \left|\frac{1}{t} \int_0^t g(\eta(s)) - g(x) \dd s \right| \right] \notag \\
	& \leq \sup_{x \in K}  \bE_x \left[ \bONE_{\{\eta(s) \in \hat{K} \text{ for } s \leq 1\}} \left|\frac{1}{t} \int_0^t g(\eta(s)) - g(x) \dd s \right| \right]  + 2\varepsilon \vn{g}. \label{eqn:bound_on_convergence_generator_martingale_problem}
	\end{align}
	Thus, we need to work on the first term on the last line. 
	
\smallskip
	
	The function $g$ restricted to the compact set $\hat{K}$ is uniformly continuous. So let $\varepsilon' > 0$, chosen smaller then $\varepsilon$, be such that if $d(x,y) < \varepsilon'$, $x,y \in \hat{K}$, then $|g(x) - g(y)| \leq \varepsilon$.
	
\smallskip
	
	By Lemma 4.5.17 in \cite{EK86}, the set $\{\PR_x \, | \, x \in K\}$ is a weakly compact set in $\cP(D_\cX(\bR^+))$. So by Theorem 3.7.2 in \cite{EK86}, we obtain that there exists a $\delta = \delta(\varepsilon') > 0$ such that
	\begin{equation*}
	\sup_{x \in K} \PR_x \left[\eta \in D_\cX(\bR^+) \, | \, \sup_{s \leq \delta} d(\eta(0),\eta(s)) < \varepsilon'\right] > 1 - \varepsilon' > 1 - \varepsilon.
	\end{equation*}
	Denote $S_\delta := \{\eta \in D_\cX(\bR^+) \, | \, \sup_{s \leq \delta} d(\eta(0),\eta(s)) < \varepsilon' \} $, so that we can summarize the equation as $\sup_{x \in K} \PR_x[S_\delta] > 1 - \varepsilon$.
	
\smallskip
	
	We reconsider the term that remained in equation \eqref{eqn:bound_on_convergence_generator_martingale_problem}. 
	\begin{align*}
	& \sup_{x \in K}  \bE_x \left[\bONE_{\{\eta(s) \in \hat{K} \text{ for } s \leq 1\}} \left|\frac{1}{t} \int_0^t g(\eta(s)) - g(x) \dd s \right| \right]  + 2 \varepsilon  \vn{g} \\
	& \leq \sup_{x \in K}  \bE_x \left[\bONE_{\{\eta(s) \in \hat{K} \text{ for } s \leq 1\} \cap S_\delta} \left|\frac{1}{t} \int_0^t g(\eta(s)) - g(x) \dd s \right| \right] + 4 \varepsilon \vn{g}. 
	\end{align*}
	On the set $\{\eta(s) \in \hat{K} \text{ for } s \leq 1\} \cap S_\delta$, we know that $d(\eta(s),x) \leq \eta$ as long as $s \leq \delta$. Thus by the uniform continuity of $g$ on $\hat{K}$, we obtain $|g(\eta(s)) - g(x)| \leq \varepsilon$ if $s \leq \delta$. Hence:
	\begin{equation*}
	\sup_{t \leq 1 \wedge \delta(\varepsilon')} \sup_{x \in K} \left|\frac{S(t)f(x) - f(x)}{t} - g(x)\right| \leq \varepsilon + 4 \varepsilon \vn{g}.
	\end{equation*}
	As $\varepsilon > 0$ was arbitrary, it follows that $f \in \cD(\hat{A})$ and $Af = g = \hat{A}f$.
	
\end{proof}

\section{Transition semigroup for a process on a locally compact space} \label{section:semigroup_on_locally_compact_space}

We now turn to the context of locally compact Polish spaces $\cX$. In this context, the transition semigroup is often considered to act on $C_0(\cX)$ equipped with the supremum norm. On this space it acts as a strongly continuous semigroup. We have seen, however, that on any Polish space $\cX$ the space $(C_b(\cX),\beta)$ natural for semigroup theory. In the following theorem, we will show that we can go forward and backward between the two perspectives.

The key property that allows to do this transition is that the Markov process under consideration conserves mass and its dual point of view, namely that the corresponding semigroup maps $C_0(\cX)$ into $C_0(\cX)$.

\begin{theorem} \label{theorem:transition_semigroup_on_locally_compact_is_strongly_continuous}
	Let $\{S(t)\}_{t \geq 0}$ be a SCLE semigroup on $(C_b(\cX),\beta)$ such that $S(t) C_0(\cX) \subseteq C_0(\cX)$ for every $t \geq 0$. Then the restriction of the semigroup to $C_0(\cX)$, denoted by $\{\tilde{S}(t)\}_{t \geq 0}$ is $\vn{\cdot}$ strongly continuous.
	
	\smallskip
	
	Conversely, suppose that we have a strongly continuous semigroup $\{\tilde{S}(t)\}_{t \geq 0}$ on $(C_0(\cX),\vn{\cdot})$ such that $\tilde{S}'(t) \cP(\cX) \subseteq \cP(\cX)$. Then the semigroup can be extended uniquely to a SCLE semigroup $\{S(t)\}_{t \geq 0}$ on $(C_b(\cX),\beta)$.
	
	\smallskip
	
	In this setting, denote by $(A,\cD(A))$ the generator of $\{S(t)\}_{t \geq 0}$ on $(C_b(\cX),\beta)$ and by $(\tilde{A},\cD(\tilde{A}))$ the generator of $\{\tilde{S}(t)\}_{t \geq 0}$ on $(C_0(\cX),\vn{\cdot})$. Then $\tilde{A} \subseteq A$ and $A$ is the $\beta$ closure of $\tilde{A}$.
\end{theorem}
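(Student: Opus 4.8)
The plan is to treat the three assertions separately, in each case transferring information across the two dualities $\cM(\cX) = (C_0(\cX),\vn{\cdot})' = (C_b(\cX),\beta)'$.

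For the forward implication I would first note that $\beta$ is weaker than $\vn{\cdot}$, so the restricted orbits $t \mapsto \tilde S(t)f$, $f \in C_0(\cX)$, are at least $\beta$-continuous; the subtlety is that $\beta$ and $\vn{\cdot}$ do \emph{not} coincide on $C_0(\cX)$ (a height-one bump marching off to infinity tends to $0$ for $\beta$ but not in norm), so $\beta$-continuity must be upgraded to norm continuity. Since every $\mu \in \cM(\cX)$ is $\beta$-continuous, $\beta$-strong continuity of $\{S(t)\}$ gives that $t \mapsto \ip{\tilde S(t)f}{\mu}$ is continuous for all $f \in C_0(\cX)$ and $\mu \in \cM(\cX) = (C_0(\cX),\vn{\cdot})'$, i.e.\ $\{\tilde S(t)\}$ is weakly continuous on the Banach space $C_0(\cX)$. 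Local boundedness $\sup_{t \le T}\vn{\tilde S(t)}_{\mathrm{op}} < \infty$ then follows from two applications of the uniform boundedness principle (first in $\mu$, then in $f$), and I would conclude norm strong continuity by the classical fact that a locally bounded weakly continuous operator semigroup on a Banach space is strongly continuous.

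For the converse I would build the extension by duality. Since $\delta_x \in \cP(\cX)$ and $\tilde S'(t)\cP(\cX) \subseteq \cP(\cX)$, the measure $\tilde S'(t)\delta_x$ is a probability measure, so one may set
\[
S(t)f(x) := \ip{f}{\tilde S'(t)\delta_x} = \int_\cX f \, \dd (\tilde S'(t)\delta_x), \qquad f \in C_b(\cX),
\]
which agrees with $\tilde S(t)f$ on $C_0(\cX)$. The heart of the matter is to show $S(t)f \in C_b(\cX)$ and, more generally, the compact containment needed for the SCLE properties, and this is exactly where conservation of mass enters. The map $x \mapsto \delta_x$ is continuous into $(\cM(\cX),\sigma(\cM,C_0))$ and $\tilde S'(t)$ is $\sigma(\cM,C_0)$-continuous, so $x \mapsto \tilde S'(t)\delta_x$ is vaguely continuous; because all these measures are probabilities, vague convergence together with conservation of total mass upgrades to weak convergence, giving continuity of $x \mapsto \ip{f}{\tilde S'(t)\delta_x}$ against every $f \in C_b(\cX)$. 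The same mechanism yields tightness: using norm strong continuity of $\{\tilde S(t)\}$, the map $(t,x)\mapsto \tilde S'(t)\delta_x$ is jointly vaguely continuous, so its image over $[0,T]\times K$ with $K$ compact is vaguely compact and consists of probability measures, hence tight by Prohorov \cite[Theorem 8.6.2]{Bo07}. This tightness is the precise analogue of \eqref{eqn:theorem_compact_containment_set}. Granted it, each $S(t)$ is $\beta$-continuous and $\{S(t)\}$ is locally equi-continuous and $\beta$-strongly continuous by repeating the tightness estimates of Lemmas \ref{lemma:semigroup_is_for_every_time_continuous} and \ref{lemma:semigroup_strongly_continuous} (reducing to the compact-open behaviour on norm-bounded sets via Theorem \ref{theorem:propertiesCbstrict}, and reducing strong to weak continuity through Proposition 3.5 of \cite{Kr16}). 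The semigroup law transfers from $\tilde S'(t)\tilde S'(s)=\tilde S'(t+s)$ together with the identity $\ip{S(s)f}{\nu}=\ip{f}{\tilde S'(s)\nu}$ for $\nu \in \cP(\cX)$, obtained by integrating the defining formula and using $\nu = \int \delta_y \, \dd\nu(y)$; uniqueness is immediate since $C_0(\cX)$ is $\beta$-dense in $C_b(\cX)$ and each $S(t)$ is $\beta$-continuous.

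Finally, for the generators: $\tilde A \subseteq A$ because on $\cD(\tilde A) \subseteq C_0(\cX)$ the difference quotients converge in $\vn{\cdot}$, hence in the weaker topology $\beta$, to $\tilde A f$, while $S(t)=\tilde S(t)$ there. As the generator of a $\beta$-SCLE semigroup is $\beta$-closed, the $\beta$-closure $\overline{\tilde A}$ is contained in $A$; for the reverse inclusion I would show $\cD(\tilde A)$ is a $\beta$-core for $A$, using that $\cD(\tilde A)$ is $\vn{\cdot}$-dense in $C_0(\cX)$ and thus $\beta$-dense in $C_b(\cX)$, that it is $S(t)$-invariant because $\tilde S(t)\cD(\tilde A) \subseteq \cD(\tilde A)$, and the core criterion for SCLE semigroups from \cite{Kr16}. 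I expect the two genuinely delicate points to be the passage from vague to weak convergence that produces the uniform tightness in the converse direction, and checking the hypotheses of the SCLE core theorem for $\cD(\tilde A)$; the forward direction is routine once one resists the temptation to identify $\beta$ with $\vn{\cdot}$ on $C_0(\cX)$.
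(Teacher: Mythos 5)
Your proposal is correct, and two of its three parts essentially coincide with the paper's proof: for the first statement the paper also passes through weak continuity of $t \mapsto \ip{\tilde{S}(t)f}{\mu}$ and then invokes Theorem I.5.8 of \cite{EN00}, whose content is exactly your uniform-boundedness argument (note you do still need each individual $\tilde{S}(t)$ to be norm-bounded before applying the uniform boundedness principle to the family; the paper gets this from the fact that $\beta$-continuous operators preserve bounded sets and that norm- and $\beta$-bounded sets coincide, Theorem \ref{theorem:propertiesCbstrict} (d)); and for the generators the paper runs the identical core argument: $\tilde{A} \subseteq A$ by comparison of topologies, $\beta$-density of $\cD(\tilde{A})$ via norm density in $C_0(\cX)$ plus Stone--Weierstrass, $S(t)$-invariance of $\cD(\tilde{A})$, a strict-topology variant of the core criterion (Proposition II.1.7 of \cite{EN00}), and $\beta$-closedness of $A$.

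The converse is where you genuinely diverge, and your route is viable. The paper never writes the extension down explicitly: it proves local $\beta$-equi-continuity of $\{\tilde{S}(t)\}_{t \leq T}$ on $(C_0(\cX),\beta)$ by pure duality theory --- identifying the equi-continuous subsets of $\cM(\cX)$ for $(C_0(\cX),\beta)$ and $(C_b(\cX),\beta)$ via 21.4.(5) of \cite{Ko69}, reducing equi-continuity of the family to $\beta$-equi-continuity of the adjoint orbits $\fS K = \{\tilde{S}'(t)\mu \,|\, t \leq T, \mu \in K\}$ via 39.3.(4) of \cite{Ko79}, reducing to non-negative $K$ via Sentilles \cite{Se72}, proving weak sequential compactness of $\fS K$ by Bourbaki--Alaoglu, metrizability of the positive cone \cite{Bo07}, a normalization trick and the vague-to-weak upgrade of Proposition 3.4.4 of \cite{EK86}, and finally converting weak compactness back into equi-continuity through the strong Mackey property --- after which the extension exists abstractly by continuity and completeness of $(C_b(\cX),\beta)$. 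You instead build $S(t)f(x) = \ip{f}{\tilde{S}'(t)\delta_x}$ pointwise, extract the uniform tightness of $\{\tilde{S}'(t)\delta_x \,|\, t \leq T,\, x \in K\}$ from joint vague continuity, compactness of continuous images and Prohorov, and then recycle the concrete estimates of Lemmas \ref{lemma:semigroup_is_for_every_time_continuous} and \ref{lemma:semigroup_strongly_continuous}. Both proofs pivot on exactly the same mechanism --- conservation of mass turning vague into weak convergence of probability measures --- but yours buys an explicit transition-kernel formula for the extension and avoids the K\"{o}the/Sentilles machinery, at the price of one verification you only gesture at: the disintegration identity $\tilde{S}'(t)\nu = \int \tilde{S}'(t)\delta_x \, \nu(\dd x)$, which underlies both your adjoint identity $\ip{S(t)f}{\nu} = \ip{f}{\tilde{S}'(t)\nu}$ and the semigroup law. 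It is standard but genuinely needed: $x \mapsto \ip{f}{\tilde{S}'(t)\delta_x}$ is continuous (hence measurable), both sides are probability measures agreeing against every $f \in C_0(\cX)$, and $\cM(\cX) = (C_0(\cX),\vn{\cdot})'$ forces equality. With that step written out, your argument is complete; the metrizability of the relevant topologies on the positive cone (so that your sequential tightness reasoning suffices) is available exactly as in the paper's appeal to Theorem 8.9.4 of \cite{Bo07}.
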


Before we start with the proof, we note that both the space $(C_b(\cX),\beta)$ and $(C_0(\cX),\vn{\cdot})$ have the space of Radon measures as a dual. As such, the space of Radon measures caries two weak topologies. The first one is the one that probabilist call the \textit{weak} topology, i.e.~$\sigma(\cM(\cX),C_b(\cX))$, and the second is the weaker \textit{vague} topology, i.e.~$\sigma(\cM(\cX),C_0(\cX))$.

\begin{proof}
	\textit{Proof of the first statement.}

	For a given time $t \geq 0$, the operator $S(t)$ is continuous on $(C_0(\cX),\vn{\cdot})$, because $S(t)$ is $\beta$ continuous and therefore maps $\beta$-bounded sets into $\beta$-bounded sets. Norm continuity of the restriction $\tilde{S}(t)$ on $C_0(\cX)$ then follows by the fact that the bounded sets for the norm and for $\beta$ coincide.
	
	\smallskip
	
	As $\{S(t)\}_{t \geq 0}$ is $(C_b(\cX), \beta)$ is strongly continuous, it is also weakly continuous, in other words, for every Radon measure $\mu$, we have that 
	\begin{equation*}
	t \mapsto \ip{S(t)f}{\mu}
	\end{equation*}
	is continuous for every $f \in C_b(\cX)$ and in particular for $f \in C_0(\cX)$. Theorem I.5.8 in Engel and Nagel \cite{EN00} yields that the semigroup $\{\tilde{S}(t)\}_{t \geq 0}$ is strongly continuous on $(C_0(\cX),\vn{\cdot})$.
	
	\smallskip

	\textit{Proof of the second statement.}
	
	First note that as $C_0(\cX)$ is $\beta$ dense in $C_b(\cX)$ by the Stone-Weierstrass theorem, cf.~Theorem \ref{theorem:propertiesCbstrict} (e). It follows that there is at most one $\beta$-continuous extension of the semigroup $\overline{S}(t)$. We thus proceed to show that $\tilde{S}(t)$ is $\beta$ continuous, so that this extension exists by continuity.  In fact, we will directly prove the stronger statement that $\{\tilde{S}(t)\}_{t \geq 0}$ is locally $\beta$ equi-continuous. 
	
	\smallskip
	
	First of all, by the completeness of $(C_b(\cX),\beta)$, the fact that $C_0(\cX)$ is dense in $(C_b(\cX),\beta)$ and 21.4.(5) in \cite{Ko69}, we have $(C_0(\cX),\beta)' = (C_b(\cX),\beta)' = \cM(\cX)$ and the equi-continuous sets in $\cM(\cX)$ with respect to $(C_0(\cX),\beta)$ and $(C_b(\cX),\beta)$ coincide. It follows by 39.3.(4) in \cite{Ko79} that $\{\tilde{S}(t)\}_{t \geq 0}$ is locally $\beta$ equi-continuous if for every $T \geq 0$ and $\beta$ closed equi-continuous set $K \subseteq \cM(\cX)$ we have that 
	\begin{equation*}
	\fS K := \{\tilde{S}'(t)\mu \, | \, t \leq T, \mu \in K \}
	\end{equation*}
	is $\beta$ equi-continuous. By Theorem 6.1 (c) in Sentilles \cite{Se72}, it is sufficient to prove this result for $\beta$ equi-continuous sets $K$ consisting of non-negative measures in $\cM(\cX)$. 	Finally, it suffices to establish that $\fS K$ is weakly compact as this implies that $\fS K$ is $\beta$ equicontinuous by the strong Mackey property of $\beta$, see Theorem \ref{theorem:propertiesCbstrict} (a).

	\smallskip
	
	Thus, let $K$ be an arbitrary weakly closed $\beta$ equi-continuous subset of the non-negative Radon measures. By Theorem 8.9.4 in \cite{Bo07}, the weak topology on the positive cone in $\cM(E)$ is metrizable so it suffices to establish weak sequential compactness of $\fS K$. Let $\nu_n$ be a sequence in $\fS K$. Clearly, $\nu_n = \tilde{S}'(t_n) \mu_n$ for some sequence $\mu_n \in K$ and $t_n \leq T$. As $K$ is $\beta$ equi-continuous, it is weakly compact by the Bourbaki-Alaoglu theorem, so without loss of generality we restrict to a weakly converging subsequence $\mu_n \in K$ with limit $\mu \in K$ and $t_n \rightarrow t$, for some $t \leq T$.
	
	\smallskip
	
	Now there are two possibilities, either $\mu = 0$, or $\mu \neq 0$. In the first case, we obtain directly that $\nu_n = \tilde{S}'(t_n) \mu_n \rightarrow 0 \ni K \subseteq \fS K$ weakly. In the second case, one can show that
	\begin{equation*}
	\hat{\mu}_n := \frac{\mu_n}{\ip{\bONE}{\mu_n}} \rightarrow \frac{\mu}{\ip{\bONE}{\mu}} =: \hat{\mu}
	\end{equation*}
	weakly, and therefore vaguely. As $\{\tilde{S}(t)\}_{t \geq 0}$ is strongly continuous on $(C_0(\cX),\vn{\cdot})$, it follows that $\tilde{S}'(t_n)\hat{\mu}_n \rightarrow \tilde{S}'(t) \hat{\mu}$ vaguely. By assumption, all measures involved are probability measures, so by Proposition 3.4.4 in Ethier and Kurtz \cite{EK86} implies that the convergence is also in the weak topology. By an elementary computation, we infer that the result also holds without the normalising constants: $\nu_n \rightarrow \tilde{S}'(t) \mu$ weakly. 
	
	So both cases give us a weakly converging subsequence in $\fS K$.
	
	\smallskip
	
	We conclude that $\{\tilde{S}(t)\}_{t \leq T}$ is $\beta$ equi-continuous. So we can extend all $\tilde{S}(t)$ by continuity to $\beta$ locally equicontinuous semigroup of maps $S(t) \colon C_b(\cX) \rightarrow C_b(\cX)$. The last thing to show is the $\beta$ strong continuity of $\{S(t)\}_{t \geq 0}$.
	
	\smallskip
	
	By Proposition 3.5 in \cite{Kr16} it is sufficient to show weak continuity of the semigroup $\{S(t)\}_{t \geq 0}$. Pick $\mu \in \cM(\cX)$, and represent $\mu$ as the Hahn-Jordan decomposition $\mu = \mu^+ - \mu^-$, where $\mu^+,\mu^-$ are non-negative measures. By construction, the adjoints of $S(t)$ and $\tilde{S}(t)$ coincide, so $t \mapsto S'(t)\mu^+$ and $t \mapsto S'(t)\mu^-$ are vaguely continuous. The total mass of the measures in both trajectories remains constant by the assumption of the theorem, so by Proposition 3.4.4 in \cite{EK86}, we obtain that $t \mapsto S'(t)\mu^+$ and $t \mapsto S'(t)\mu^-$ are weakly continuous. This directly implies that $\{S(t)\}_{t \geq 0}$ is weakly continuous and thus strongly continuous. 
	
	\smallskip

	\textit{Proof of the third statement.}

	Let $(\tilde{A},\cD(\tilde{A}))$ be the generator of $\{\tilde{S}(t)\}_{t \geq 0}$ and $(A,\cD(A))$ the one of $\{S(t)\}_{t \geq 0}$. As the norm topology is stronger than $\beta$, it is immediate that $\tilde{A} \subseteq A$.
	
	We will show that $\cD(\tilde{A})$ is a \textit{core} for $(A,\cD(A))$, i.e.~$\cD(\tilde{A})$ is dense in $\cD(A)$ for the $\beta$-graph topology on $\cD(A)$. This will follow by a variant of Proposition II.1.7 in \cite{EN00} which proven for Banach spaces but which also holds for the strict topology.
	
	Thus, it suffices to prove $\beta$ density of $\cD(\tilde{A})$ in $C_b(\cX)$ and that $S(t) \cD(\tilde{A}) \subseteq \cD(\tilde{A})$. 
	
	The first claim follows because $\cD(\tilde{A})$ is norm, hence $\beta$, dense in $C_0(\cX)$ by Theorem II.1.4 in \cite{EN00} and because $C_0(\cX)$ is $\beta$ dense in $C_b(\cX)$ by the Stone-Weierstrass theorem, cf. Theorem \ref{theorem:propertiesCbstrict} (e). The second claim follows because $S(t) \cD(\tilde{A}) = \tilde{S}(t) \cD(\tilde{A}) \subseteq \cD(\tilde{A})$ by e.g.~Lemma II.1.3 in \cite{EN00} or Lemma 5.2 in \cite{Kr16}. 
	
	\smallskip
	
	We conclude that $\cD(\tilde{A})$ is a core for $\cD(A)$. As $A$ is $\beta$ closed, it follows that $A$ is the $\beta$ graph-closure of $\tilde{A}$.
\end{proof}

\appendix

\section{Properties of the strict topology} \label{appendix:properties_strict_topology}

The strict topology is the `right' generalisation of the norm topology on $C(\cX)$ for compact metric $\cX$ to the more general context of Polish spaces. To avoid further scattering of results, we collect some of the main properties of $\beta$.

\begin{theorem} \label{theorem:propertiesCbstrict} 
	Let $\cX$ be Polish. The locally convex space $(C_b(\cX),\beta)$ satisfies the following properties.
	\begin{enumerate}[(a)]
		\item $(C_b(\cX),\beta)$ is complete, strong Mackey (i.e.~all weakly compact sets in the dual are equi-continuous) and the continuous dual space coincides with the space of Radon measures on $\cX$ of bounded total variation.
		\item $(C_b(\cX),\beta)$ is separable.
		\item For any locally convex space $(F,\tau_F)$ and $\beta$ to $\tau_F$ sequentially equi-continuous family $\{T_i\}_{i \in I}$ of maps $T_i \colon (C_b(\cX),\beta) \rightarrow (F,\tau_F)$,  the family $I$ is $\beta$ to $\tau_F$ equi-continuous.
		\item The norm bounded and $\beta$ bounded sets coincide. Furthermore, on norm bounded sets $\beta$ and $\kappa$ coincide.
		\item Stone-Weierstrass: Let $M$ be an algebra of functions in $C_b(\cX)$. If $M$ vanishes nowhere and separates points, then $M$ is $\beta$ dense in $C_b(\cX)$.
		\item Arzel\`{a}-Ascoli: A set $M \subseteq C_b(\cX)$ is $\beta$ compact if and only if $M$ is norm bounded and $M$ is an equi-continuous family of functions.
		\item $(C_b(\cX),\beta,\leq)$, where $\leq $ is defined as $f \leq g$ if and only if $f(x) \leq g(x)$ for all $x \in \cX$, is locally convex-solid.
		\item Dini's theorem: If $\{f_\alpha\}_{\alpha}$ is a net in $C_b(\cX)$ such that $f_\alpha$ increases or decreases point-wise to $f \in C_b(\cX)$, then $f_\alpha \rightarrow f$ for the strict topology.
	\end{enumerate}
\end{theorem}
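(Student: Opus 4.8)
My plan is to treat Theorem~\ref{theorem:propertiesCbstrict} as a compendium whose genuinely deep analytic content sits in two places: the dual/completeness/Mackey package of item~(a) and the strict-topology Stone--Weierstrass theorem of item~(e). These I would not reprove but import: that $(C_b(\cX),\beta)'=\cM(\cX)$, that $(C_b(\cX),\beta)$ is complete, and that it is strong Mackey are precisely the facts for which the strict topology was built, established by Buck \cite{Bu58} in the locally compact case and by Sentilles \cite{Se72} in the general and Polish setting (see also \cite{Kr16}); the $\beta$-Stone--Weierstrass theorem~(e) is likewise Buck's, its real content being the control of the global size of the approximants, which a naive ``apply classical Stone--Weierstrass on each compact'' does not supply. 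Everything else I would \emph{derive}, using item~(d) and item~(a) as working cornerstones.

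I would prove item~(d) first, directly from the defining seminorms $p_{K_n,a_n}(f)=\sup_n a_n\sup_{x\in K_n}|f(x)|$. A norm-bounded set is $\beta$-bounded because $p_{K_n,a_n}(f)\le (\sup_n a_n)\vn{f}$; conversely, if $B$ is not norm-bounded I pick $f_m\in B$ and $x_m$ with $|f_m(x_m)|>m^2$, and the seminorm built from $K_m=\{x_m\}$ and $a_m=1/m\downarrow 0$ satisfies $p(f_m)\ge \tfrac1m|f_m(x_m)|>m$, so $B$ is not $\beta$-bounded. For the coincidence $\beta=\kappa$ on a set bounded by $M$: given $p_{K_n,a_n}$ and $\varepsilon>0$, choose $N$ with $a_n<\varepsilon/M$ for $n>N$, so the tail $\sup_{n>N}a_n\sup_{K_n}|f|\le\varepsilon$ uniformly, while the head is dominated by the compact-open seminorm of $K=\bigcup_{n\le N}K_n$. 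With~(d) in hand, items~(g) and~(h) are short. Each $p_{K_n,a_n}$ is a lattice seminorm ($|f|\le|g|$ forces $p_{K_n,a_n}(f)\le p_{K_n,a_n}(g)$), so $\beta$ has a base of solid neighborhoods, giving~(g); and for Dini~(h) I set $g_\alpha=f_\alpha-f\ge0$ decreasing to $0$, bound all $g_\alpha$ by the fixed bounded function $g_{\alpha_0}$ by monotonicity, and split $\sup_n a_n\sup_{K_n}|g_\alpha|$ into a finite head (classical Dini gives uniform convergence on each compact $K_n$) and a tail made small by $a_n\downarrow0$ together with the uniform bound.

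For Arzel\`a--Ascoli~(f) I would observe that a $\beta$-compact set is $\beta$-bounded, hence norm-bounded by~(d), and that $\beta=\kappa$ there, so the characterization reduces to the classical compact-open Arzel\`a--Ascoli theorem. Item~(c) I would reduce to~(a): testing equicontinuity seminorm-by-seminorm in $F$ recasts $\{T_i\}$ as a family of subsets of $\cM(\cX)=(C_b(\cX),\beta)'$, and the point is that a sequentially equicontinuous such family is equicontinuous. By the strong Mackey property a subset of $\cM(\cX)$ is $\beta$-equicontinuous exactly when it is relatively weakly compact, and on a Polish $\cX$ the weak topology is metrizable on such (bounded) sets by Prohorov's theorem (cf.~\cite{Bo07}); since on a metrizable set compactness coincides with sequential compactness, the sequential hypothesis upgrades to the full conclusion. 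Separability~(b) I would obtain by producing a countable $\bQ$-subalgebra of $C_b(\cX)$ that separates points and vanishes nowhere (possible since $\cX$ is separable metric) and invoking~(e) for its $\beta$-density, citing \cite{Se72,Kr16} for the closure details.

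The main obstacle is, honestly, not in these reductions but in the imported items: the Riesz-type identification of the dual, completeness, and the strong Mackey property in~(a), together with the $\beta$-Stone--Weierstrass theorem~(e), carry the real weight and are taken from Sentilles and Buck. Among the parts I would actually carry out, I expect~(c) to be the delicate one, since it is the single place where the argument must marry the strong Mackey property with the \emph{sequential} nature of weak compactness in $\cM(\cX)$; getting the quantifiers right between sequential equicontinuity of the family and weak sequential compactness of the associated sets of measures is where care is needed.
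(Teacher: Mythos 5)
Your overall architecture is sound and genuinely different from the paper's: the paper proves Theorem \ref{theorem:propertiesCbstrict} purely by citation ((a), (c) from Sentilles \cite{Se72}, Wheeler \cite{Wi81}, Webb \cite{We68} and Krein's theorem; (b) from \cite{Su72}; (d) from \cite{Se72,Wi61}; (e) from \cite{Ha76}; (f) from Engelking's Ascoli theorem plus (d); (g), (h) from \cite{Se72}), whereas you import only (a) and (e) and derive the rest. Your arguments for (d), (g) and (h) are correct and elementary (the seminorm construction with $K_m=\{x_m\}$, $a_m=1/m$ for unboundedness, the head/tail split for $\beta=\kappa$ on norm-bounded sets, the solidity of each $p_{K_n,a_n}$, and Dini for nets with the tail controlled by $\vn{g_{\alpha_0}}$ all check out); your (f) in fact coincides with the paper's own derivation; and your (b) via a countable $\bQ$-algebra plus (e) is a clean replacement for the citation of \cite{Su72}. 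A minor attribution point: the paper sources the $\beta$-Stone--Weierstrass theorem to \cite{Ha76}, not Buck, but that does not affect the mathematics.

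The genuine gap is in (c). Your reduction of the vector-valued statement to the scalar one via polars is fine: equicontinuity of $\{T_i\}$ is equivalent to $\beta$-equicontinuity of the sets $H_q=\{T_i'\lambda \mid i\in I,\ \lambda\in U_q^\circ\}\subseteq\cM(\cX)$, and sequential equicontinuity says exactly that $\sup_{\mu\in H_q}|\ip{f_k}{\mu}|\to 0$ along every $\beta$-null sequence $(f_k)$. But at the crucial step you argue: strong Mackey identifies $\beta$-equicontinuous sets with relatively weakly compact ones, the weak topology is metrizable on bounded sets, and ``since on a metrizable set compactness coincides with sequential compactness, the sequential hypothesis upgrades to the full conclusion.'' This conflates two different sequential notions. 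Metrizability lets you pass between compactness and \emph{sequential compactness of $H_q$ in the weak topology of measures}; your hypothesis, however, is sequential equicontinuity against $\beta$-null sequences \emph{of functions}, and nothing you have written shows that such a set $H_q$ is relatively weakly sequentially compact (equivalently, total-variation bounded and uniformly tight) in the first place. That implication is precisely the content of the results the paper cites (Corollary 3.6 of \cite{We68}, Theorem 7.4 of \cite{Wi81}): its proof requires a gliding-hump construction --- assuming a tightness defect, one must build a $\beta$-null sequence $(f_n)$ supported increasingly far out on which $H_q$ fails to be uniformly small --- and this is delicate on a non-locally-compact Polish space, where no exhaustion by compacts is available. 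A secondary inaccuracy compounds this: the metrizability you invoke (Theorem 8.9.4 of \cite{Bo07}) holds for the weak topology on the \emph{nonnegative} cone of $\cM(\cX)$, not on norm-bounded sets of signed measures; to use it you would additionally need a Hahn--Jordan reduction in the style of Theorem 6.1(c) of \cite{Se72}, as the paper itself does in the proof of Theorem \ref{theorem:transition_semigroup_on_locally_compact_is_strongly_continuous}. As written, then, item (c) is not proved; either supply the tightness construction or import it, as the paper does, from \cite{We68,Wi81}.
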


Note that $(d)$ implies that a sequence $f_n \stackrel{\beta}{\rightarrow} f$ if and only if $\sup_n \vn{f_n} < \infty$ and $f_n \stackrel{\kappa}{\rightarrow} f$. 

\begin{proof}
	(a) and (c) follow from Theorems 9.1 and 8.1 in \cite{Se72}, Theorem 7.4 in \cite{Wi81}, Corollary 3.6 in \cite{We68} and Krein's theorem\cite[24.5.(4)]{Ko69}. (b) follows from Theorem 2.1 in \cite{Su72}. (d) follows by Theorems 4.7, 2.4 in \cite{Se72} and 2.2.1 in \cite{Wi61}. (e) is proven in Theorem 2.1 and Corollary 2.4 in \cite{Ha76}. (f) follows by the Arzel\`{a}-Ascoli theorem for the compact-open topology, Theorem 8.2.10 in \cite{En89}, and (d). To conclude, (g) and (h) follow from Theorems 6.1 and 6.2 in \cite{Se72}.
\end{proof}

\textbf{Acknowledgment}

The author thanks Moritz Schauer for discussions on the topic of the strict topology and its applicability to probability theory.

%
%
%

\bibliographystyle{abbrv}
\bibliography{../KraaijBib}
\end{document}